\newtheorem{theorem}{Theorem}[section]
\newtheorem{lemma}[theorem]{Lemma}
\newtheorem{proposition}[theorem]{Proposition}
\newtheorem{assumption}[theorem]{Assumption}
\theoremstyle{definition}
\newtheorem{remark}{Remark}[section]
\theoremstyle{remark}
\numberwithin{equation}{section}
\begin{document}

\newcommand{\sgn}{\operatorname{sgn}}

\def\a{\alpha}
\def\b{\beta}
\def\d{\delta}
\def\g{\gamma}
\def\l{\lambda}
\def\o{\omega}
\def\s{\sigma}
\def\t{\tau}
\def\th{\theta}
\def\r{\rho}
\def\D{\Delta}
\def\G{\Gamma}
\def\O{\Omega}
\def\e{\epsilon}
\def\p{\phi}
\def\P{\Phi}
\def\S{\Psi}
\def\E{\eta}
\def\m{\mu}

\newcommand{\into}{\hookrightarrow}

\def\grad{\nabla}
\def\bar{\overline}
\newcommand{\reals}{\mathbb{R}}
\newcommand{\naturals}{\mathbb{N}}
\newcommand{\ints}{\mathbb{Z}}
\newcommand{\complex}{\mathbb{C}}
\newcommand{\rationals}{\mathbb{Q}}
\newcommand{\dualprod}[1]{\left\langle #1 \right\rangle}
\newcommand{\innerprod}[1]{\big( #1\big)}
\newcommand{\norm}[1]{\left\|#1\right\|}
\newcommand{\abs}[1]{\left|#1\right|}
\newcommand{\plap}{\Delta_p}
\newcommand{\plnd}[1]{|\nabla{#1}|^{p-2}\nabla{#1}}
\newcommand{\mlnd}[1]{|\nabla{#1}|^{m-2}\nabla{#1}}
\newcommand{\pldf}[1]{\textrm{div}(|\nabla{#1}|^{p-2}\nabla{#1})}
\newcommand{\Top}[1]{-\frac{1}{\lambda}\D #1-\plap #1 - f(\frac{a+#1}{\lambda}) + \lambda #1}
\newcommand{\hoo}{\textrm{H}^1_0(\Omega)}
\newcommand{\hoodual}{\textrm{H}^{-1}(\O)}
\newcommand{\ho}{\textrm{H}^1_0}
\newcommand{\hoe}{\textrm{H}^{1-\epsilon}(\O)}
\newcommand{\woop}{\textrm{W}_0^{1,p}(\Omega)}
\newcommand{\woom}{\textrm{W}_0^{1,m}(\Omega)}
\newcommand{\wop}{\textrm{W}_0^{1,p}}
\newcommand{\woopdual}{\textrm{W}^{-1,p'}(\O)}
\newcommand{\ltwo}{L^2(\O)}
\newcommand{\Lq}{L^q(\O)}
\newcommand{\lsp}[1]{L^{#1}(\O)}
\newcommand{\util}{\tilde{u}}
\newcommand{\dom}{\mathcal{D}}
\newcommand{\A}{\mathscr{A}}
\newcommand{\B}{\mathscr{B}}
\newcommand{\QE}{\mathscr{E}}
\newcommand{\ran}{\mathcal{R}}
\newcommand{\op}[2]{-\Delta {#1}  -\plap {#2} - f({#1})}


\def\R{{\bf R}}
\def\M{{\bf M}} 
\def\N{{\bf N}} 
\def\O{{\bf O}} 
\def\S{{\bf S}} 
\def\d{\displaystyle} 
\def\e{{\varepsilon}} 
\def\o{\overline} 
\def\wt{\widetilde} 
\def\wh{\widehat} 
\def\vp{\varphi} 
\def\p{\partial} 
\def\v#1{\mbox{\boldmath $#1$}} 

\author[M.  Rammaha]{Mohammad Rammaha}
\address{Department of
Mathematics \\ University of Nebraska-Lincoln \\
Lincoln, NE  68588-0130, USA} \email{mrammaha1@unl.edu}

\author[H. Takamura]{Hiroyuki Takamura}
\address{Department of Complex and Intelligent Systems \\ Faculty of Systems Information Science\\Future University Hakodate\\
116-2 Kamedanakano-cho, Hakodate, Hokkaido 041-8655, Japan} \email{takamura@fun.ac.jp}

\author[H. Uesaka]{Hiroshi Uesaka }
\address{Department of Mathematics \\ College of Science and Technology\\ Nihon University\\
Chiyodaku Kanda Surugadai 1-8, Tokyo, 101-8308, Japan.} \email{uesaka@math.cst.nihon-u.ac.jp}

\author[K. Wakasa]{Kyouhei Wakasa}
\address{The 2nd year of the doctor course,
Department of Mathematics, Hokkaido University, Sapporo, 060-0810, Japan.}
\email{wakasa@math.sci.hokudai.ac.jp}

\title[Wave Equations ]{Blow-Up of Positive Solutions to  Wave Equations in High Space Dimensions}

\subjclass[2010]{Primary: 35L05, 35L20, Secondary: 58J45}
\keywords{wave equations, blow-up, the Riemann function}
\thanks{The second author is partially supported by Grant-in-Aid for Science Research (C) (No. 40241781), JSPS}

\begin{abstract} This paper is concerned with the Cauchy problem for the semilinear wave equation: 
\begin{align*}
u_{tt}-\Delta u=F(u) \quad \mbox{in}\quad\reals^n\times[0,\infty),
\end{align*}
where the space dimension $n\geq 2$,  $F(u)=|u|^p$ or $F(u)=|u|^{p-1}u$ with $p>1$. Here,  the Cauchy data are non-zero and  non-compactly supported. Our results on the blow-up of positive radial solutions (not necessarily radial in low dimensions $n=2, 3$) generalize and extend the results of Takamura \cite{Tak1} and Takamura, Uesaka and Wakasa \cite{Tak3}. The main technical difficulty in the paper lies in obtaining the lower bounds for the free solution when both initial position and initial velocity are non-identically zero in even space dimensions. 

\end{abstract}

\maketitle

\section {Introduction }\label{S1}

We consider the following Cauchy problem:
\begin{align}
\label{1.1}
\begin{cases}
u_{tt}-\Delta u=F(u) \quad \mbox{in}\quad\reals^n\times[0,\infty),\\
u(x,0)=f(x),\ u_t(x,0)=g(x)\quad \text{in}\quad\reals^n,
\end{cases}
\end{align}
where $n\ge2$, $u=u(x,t)$ is a scalar unknown function of space-time variables, and
$F(u)=|u|^p$ or $F(u)=|u|^{p-1}u$, with $p>1$. The scenario in one space-dimension is  fairly simple and one can always find general conditions on the initial data to prove the blow up of classical solution. Thus, we only consider problem (\ref{1.1})  in high  space dimensions, $n\geq 2$.

For the case of compactly supported initial data $\{f, \, g\}$ and when $F(u)=|u|^p$, we recall  Strauss' conjecture. Namely, there exists a critical number $p_0(n)$ such that (\ref{1.1})  has a global in time solution if the initial data are {\em sufficiently small} and $p>p_0(n)$; and (\ref{1.1}) has no global solutions if $1<p\leq p_0(n)$ and the initial data are positive in some sense. It was conjectured that $p_0(n)$ is the positive  root of the equation $(n-1)p^2-(n+1) p-2=0$. That is,
$$p_0(n)=\frac{1}{2(n-1)}\Big[n+1+\sqrt{n^2+10n-7}\Big].$$ 
We note here that $p_0(n)$ comes from the integrability of a certain weight function in the iteration argument for (\ref{1.1}).

The conjecture was first verified by John \cite{Jh1} for $n=3$, but not for $p=p_0(3)$. Glassey \cite{Gl-1,Gl-2}  verified the conjecture for $n=2$, but not for $p=p_0(2)$. The critical exponents $p=p_0(2)$ and $p=p_0(3)$ were proven by Schaeffer \cite{Sch}. In high space dimensions, $n\geq 4$, the subcritical case $1<p< p_0(n)$ was handled by Sideris \cite{sid1}, and later by Rammaha \cite{R7} who provided a simplified proof.   The super critical case $p>p_0(n)$ was proven by Georgiev, Lindblad, and Sogge \cite{GLS}. Finally, the critical case $p=p_0(n)$, $n\geq 4$ was handled by Yordanov and Zhang \cite{YZ}, and independently by Zhou \cite{Zhou}. Thus, Strauss' conjecture has been completely resolved and
all of  the cited results above on Strauss' conjecture are summarized in the following table:
\begin{center}
\begin{tabular}{|c||c|c|c|}
\hline
 & $1<p<p_0(n)$ & $p=p_0(n)$ & $p>p_0(n)$ \\
\hline
\hline
$n=2$ & \cite{Gl-2} & \cite{Sch} & \cite{Gl-1} \\
\hline
$n=3$ & \cite{Jh1} & \cite{Sch} & \cite{Jh1} \\
\hline
$n\ge4$ & \cite{sid1} & \cite{YZ}, \cite{Zhou} (independently) & \cite{GLS} \\
\hline
\end{tabular}
\end{center}

However, the scenario is somewhat different when the initial data are not compactly supported and decaying slowly at infinity. In fact,  problem (\ref{1.1}) may have no global solution even for the supercritical case ($p>1$ is arbitrarily large). Indeed, the pioneering results on non-compactly supported initial  data  by Asakura \cite{As} strongly suggests the validity of the following statement:
\begin{equation}\label{conj-slow}
\fbox{
\parbox{0.8\textwidth} {\centering
 There exists a critical decay exponent   $\kappa_0 >0$ such that  (\ref{1.1}) has a global solution, provided $\kappa \geq\kappa_0$, $p>p_0(n)$ and the initial data are sufficiently small, yet (\ref{1.1}) has no global solutions, provided $0<\kappa <\kappa_0$, $p>1$, and the initial data are positive in some sense.
} }
\end{equation}
It is remarkable that (see for instance \cite{As})  
the critical decay exponent  $\kappa_0$ is independent of the space dimension $n$ and it is given by:
\begin{align}\label{kappa_0}
\kappa_0 = \frac{2}{p-1},\,\,\,\ p>1.
\end{align}  
As shown in \cite{As}  and later by
Takamura, Uesaka and Wakasa \cite{Tak3} that there exists  a constant $L>0$ such that (\ref{1.1}) has no global solution if the initial data $\{f, \, g\}$ satisfy:
\begin{align}
\label{condi:cblow-up1}
f(x)\equiv0\quad\mbox{and}\quad g(x)\ge\frac{\phi(|x|)}{(1+|x|)^{1+\kappa}},
\end{align}
or
\begin{align}
\label{condi:cblow-up2}
f(x)>0,\ \Delta f(x)+F(f(x))\ge\frac{\phi(|x|)}{(1+|x|)^{2+\kappa}}
\quad\mbox{and}\quad g(x)\equiv0,
\end{align}
for all $|x|\ge L$, with
\begin{align}
\label{condi:phi1}
0<\kappa<\kappa_0\quad\mbox{and}\quad\phi(x)\equiv\mbox{positive const.},
\end{align}
or
\begin{align}
\label{condi:phi2}
\kappa=\kappa_0,\quad\phi\ \mbox{is positive, monotonously increasing and}
\lim_{|x|\rightarrow\infty}\phi(|x|)=\infty.
\end{align}
On the other hand (see for instance the results in  \cite{Kub1}), (\ref{1.1}) has a global solution provided
\begin{align}
\label{condi:cglobal}
(1+|x|)^{1+\kappa}\left(\frac{|f(x)|}{1+|x|}+\sum_{0<|\alpha|\le[n/2]+2}|\nabla_x^\alpha f(x)|
+\sum_{|\beta|\le[n/2]+1}|\nabla_x^\beta g(x)|\right)
\end{align}
is sufficiently small, $\kappa\ge\kappa_0$ and $p>p_0(n)$, where $n=2,3$. 
In high odd space dimensions  $n=2m+1$, $m\ge2$, Kubo's results \cite{Kubo96} shows that the radially symmetric version of (\ref{1.1}) has a global solution, provided
\begin{align}
\label{condi:cglobal_rad}
\sum_{j=0}^2|f^{(j)}(r)|\langle r\rangle^{\kappa+j}
+\sum_{j=0}^1|g^{(j)}(r)|\langle r\rangle^{1+\kappa+j} 
\end{align} 
is sufficiently small,  where $\langle r\rangle=\sqrt{1+r^2}$ and $r=|x|$. 
A similar result was obtained by Kubo and Kubota \cite{KK} in the  case of even space dimensions $n=2m$, $m\ge2$, but under a more stringent condition than (\ref{condi:cglobal_rad}) near $r=0$.
We note that the similar result for the equation with the potential has obtained by Karageorgis \cite{Kara05}.

When $n=3$, Asakura \cite{As} was the first to prove the nonexistence result 
under the validity of  (\ref{condi:cblow-up1}) or  (\ref{condi:cblow-up2}). In addition,  Asakura \cite{As} resolved the existence part under assumption  (\ref{condi:cglobal}). 
The critical case ($\kappa=\kappa_0$) and $n=3$  was handled  by Kubota \cite{Kub1} with  the assumption (\ref{condi:cglobal}), and also independently by Tsutaya \cite{Ts1}. 
For $n=2$, the nonexistence part with (\ref{condi:cblow-up1}), (\ref{condi:cblow-up2}) was verified by Agemi and Takamura \cite{Ag-Tak}, and the existence part was verified by 
Kubota \cite{Kub1}, and both parts by Tsutaya \cite{Ts2,Ts3}. 
We note here that the all of existence results mentioned above were proven without the presence of  
$(1+|x|)^{-1}$ in the first term of (\ref{condi:cglobal}). 
However, Kubota and Mochizuki \cite{KubotaMochizuki}, in their work to prove the existence of the scattering operator, were the first to introduce (\ref{condi:cglobal}) when $n=2$. 
In higher space dimensions and for radial solutions, 
the existence part with (\ref{condi:cglobal_rad}) was handled by Kubo \cite{Kubo96}, Kubo and Kubota \cite{Ku2, KK}, 
and the nonexistence part with (\ref{condi:cblow-up1}) and (\ref{condi:cblow-up2}) was verified by Takamura \cite{Tak1}. Another relevant 
 nonexistence result in high dimensions $n\ge2$  is due to  Kurokawa and Takamura \cite{KuroTak}.
In view of (\ref{condi:cglobal_rad}), we note that the final form of (\ref{condi:cglobal}) will be 
\begin{align}
\label{condi:cglobal_final}
\sum_{|\alpha|\le[n/2]+2}(1+|x|)^{\kappa+|\alpha|}|\nabla_x^\alpha f(x)|
+\sum_{|\beta|\le[n/2]+1}(1+|x|)^{\kappa+1+|\beta|}|\nabla_x^\beta g(x)|.
\end{align}

All of the cited results on non-compactly supported initial data are summarized in the following tables.
\begin{center}
\begin{tabular}{|c||c|c|c|}
\hline
Global existence & $\kappa=\kappa_0$ & $\kappa>\kappa_0$ \\
\hline
\hline
$n=2$ & \cite{Kub1}, \cite{Ts2}  independently & \cite{Kub1}, \cite{Ts3} independently\\
\hline
$n=3$ & \cite{Kub1}, \cite{Ts1}  independently & \cite{As} \\
\hline
$n\ge4$ & \cite{Kubo96} & \cite{Ku2} and \cite{KK} \\
\hline
\end{tabular}
\end{center}
\begin{center}
\begin{tabular}{|c||c|c|}
\hline
Blow-up & (\ref{condi:cblow-up1}) & (\ref{condi:cblow-up2})\\
\hline
\hline
(\ref{condi:phi1}) & \cite{Ag-Tak}, \cite{Ts3} independently \mbox{for} $n=2$ & \cite{Tak2}\\ 
& \cite{As} \mbox{for} $n=3$ &\\
& \cite{Tak1} \mbox{for} $n\ge4$ & \\
\hline
(\ref{condi:phi2}) & \cite{KuroTak} & \cite{Tak3}\\
\hline
\end{tabular}
\end{center}
\smallskip

Thus far, all of the cited nonexistence results above were proven with zero initial position, except for Takamura, Uesaka and Wakasa \cite{Tak2,Tak3} who
proved a  nonexistence result under the nonzero initial position with the assumption (\ref{condi:cblow-up2}) by differentiating  (\ref{1.1}) with respect to time. 

In this paper we prove a  blow-up result with sharp decay
for $f\not\equiv0$ and $g\not\equiv0$. 
The main goal of this work is to obtain the required point-wise lower bounds for the {\em free} solution of the  wave equations 
by making full use of the formulas by Rammaha \cite{R3,R7} in high dimensions. 
In low space dimensions, one can obtain such lower bounds solutions as in Caffarelli and Friedman \cite{CF}. 
However, it is highly nontrivial to obtain the mentioned lower bounds for the free solution when both initial data are non-zero, particularly in high even dimensions.  We overcome the main technical difficulty in  high even dimensions by introducing a special  change of variables given in (\ref{change_vari}).

\section{Main results} \label{sec1}
In high space dimensions $n\geq 4$, we restrict our analysis to radial solutions. More precisely,
we consider the following radially symmetric version of (\ref{1.1}):
\begin{align}\label{2.1}
\begin{cases}
u_{tt}-\d\frac{n-1}{r}u_r-u_{rr}=F(u),  \mbox{   in   }\ (0,\infty)\times[0,\infty),\vspace{.1in}\\
u(r,0)=f(r),\ u_t(r,0)=g(r), \mbox{   in  }\  (0,\infty).
\end{cases}
\end{align}
Henceforth, our assumptions (see Assumption \ref{ass} below) in high dimensions $n\geq 4$ are in reference of the Cauchy problem (\ref{2.1}).

In order to  state our main results, we begin with the assumptions on the initial data and the parameters.

\begin{assumption} \label{ass}
\hfill
\begin{itemize}
\item \textbf{The nonlinearity:}  $F\in C^1(\reals)$ 
satisfying
\begin{align}
\label{asm_F}
F(s)\ge As^{p}, \,\,\, \mbox{ for  }\ s\ge0,
\end{align}
where $p>1$ and $A>0$.
\vspace{.1in}

\item \textbf{Low space dimensions, $n=2, 3$:} There exists a constant $R>0$ such that the initial data satisfying:  $f \in C^3(\reals^n)$ 
and $g \in C^2(\reals^n)$ and such that
\begin{align}
\label{asm_low}
\begin{cases}
f(x)>0 \ \mbox{ for }\ |x|\ge R, \vspace{.1in}\\
\d \frac{f(x)}{1+|x|}-|\nabla f(x)|+g(x)\ge \frac{C_0}{(1+|x|)^{1+\kappa}}, 
\ \mbox{ for }\ |x|\ge R,
\end{cases}
\end{align} 
for some positive constants $C_0$ and $\kappa$. 
\vspace{.1in}

\item \textbf{High space dimensions, $n\geq 4$:}  There exists a  constant $R>0$ such that 
$f\in C^2(0,\infty)$ and $g\in C^1(0,\infty)$ satisfying 
\begin{align}\label{asm_odd_1}
\begin{cases}
\d f(r)\ge \frac{C_1}{(1+r)^{\kappa}},\ g(r)>0\\
\d-C_{1,m}\frac{f(r)}{r}+g(r)>0,\ \mbox{   for   }\ r\ge R, 
\end{cases}
\end{align}
or
\begin{align}\label{asm_odd_2}
\begin{cases}
\d f(r),g(r)>0\\
\d-C_{1,m}\frac{f(r)}{r}+g(r)\ge\frac{C_2}{(1+r)^{1+\kappa}},\ \mbox{   for   }\ r\ge R, 
\end{cases}
\end{align}
if $n=2m+1$, 
\begin{align}\label{asm_even}
\begin{cases}
\d f(r),g(r)>0\\
\d -C_{2,m}\frac{f(r)}{r}-|f'(r)|+\frac{1}{2} g(r)
\ge\frac{C_3}{(1+r)^{1+\kappa}}, \ \mbox{   for   }\ r\ge R,
\end{cases}
\end{align}
if $n=2m$, where $m=2,3, \cdots$, $C_1$, $C_2$ and $C_3$ are positive constants, and the  constants  
$C_{1,m}$ and $C_{2,m}$ are given by 
\[
C_{1,m}=m(m-1),\quad 
C_{2,m}=m-\frac{3}{8}+\frac{5\zeta_m(m-1)^2}{3},
\]
where  $\zeta_m>0$ is as determined in Lemma \ref{lem_4.1}. 
\vspace{.1in}

\item \textbf{Parameters:} $0<\kappa<\kappa_0$, where  $\d \kappa_0 = \frac{2}{p-1}$.
\end{itemize}
\end{assumption}

Our first  result is on the finite-time blow up  of classical solutions in low dimensions, without imposing radial symmetry.
\begin{theorem}
\label{th_2.1}
Assume the validity of Assumption \ref{ass} with  $n=2$ or $n=3$, and $u$ is a solution of (\ref{1.1}).
Then $u$ cannot exist globally in time.
\end{theorem}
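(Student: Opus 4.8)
The plan is to combine the positivity of the retarded fundamental solution of the wave operator with a pointwise lower bound for the \emph{free} solution and a bootstrap (iteration) argument along the forward light cone, following the scheme of Caffarelli and Friedman \cite{CF} and Takamura \cite{Tak1}. Write $u = u^0 + L[F(u)]$, where $u^0$ solves the homogeneous wave equation with data $(f,g)$ and $L$ denotes the Duhamel operator solving $w_{tt}-\Delta w = F(u)$ with zero data, given explicitly by Kirchhoff's formula when $n=3$ and by the descent (Poisson) formula when $n=2$; neither requires radial symmetry, which is why the theorem can dispense with it in low dimensions. Using that $F(s)\ge As^p\ge 0$ for $s\ge 0$ and that the retarded kernel is nonnegative, a standard continuity argument shows $u\ge u^0\ge 0$ in the region of interest, together with the basic inequality $u(x,t)\ge A\,L[u^p](x,t)$.

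The heart of the argument is the lower bound for $u^0$. For $n=3$ I would start from $u^0(x,t)=\frac{1}{4\pi t^2}\int_{|y-x|=t}\big(f(y)+\nabla f(y)\cdot(y-x)+t\,g(y)\big)\,dS_y$ and estimate $\nabla f(y)\cdot(y-x)\ge -t\,|\nabla f(y)|$. Restricting to a region of the form $\{(x,t): t\ge R,\ |x|\ge 2t+R\}$, so that the whole sphere $|y-x|=t$ lies in $\{|y|\ge R\}$ and satisfies $1+|y|\ge t$ and $1+|y|\le 2(1+|x|)$, I would use $f(y)\ge t\,\frac{f(y)}{1+|y|}$ and factor out $t$ to produce exactly the combination $\frac{f(y)}{1+|y|}-|\nabla f(y)|+g(y)$ appearing in (\ref{asm_low}). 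Inserting the assumed lower bound and using $1+|y|\le 2(1+|x|)$ over the sphere of area $4\pi t^2$ yields $u^0(x,t)\ge \frac{c\,C_0\,t}{(1+|x|)^{1+\kappa}}$ on that region. The case $n=2$ is handled identically from the Poisson formula, where the same combination $\frac{f}{1+|x|}-|\nabla f|+g$ again isolates the sign-definite part.

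With the seed bound in hand I would run the iteration. Feeding an ansatz of the form $u(y,s)\ge \frac{c_j\,s^{a_j}}{(1+|y|)^{b_j}}$ into $u\ge A\,L[u^p]$ and evaluating the retarded integral over the backward cone from $(x,t)$ (which stays inside the good region $t\lesssim |x|$) produces a new bound of the same shape with updated constants and exponents. A heuristic check confirms the mechanism: near the boundary of the region the seed gives $u^0\sim |x|^{-\kappa}$, so the nonlinear weight scales like $|x|^{-\kappa(p-1)}$ against the $|x|^{-2}$ from the kernel, and non-integrability occurs precisely when $\kappa(p-1)<2$, i.e. $\kappa<\kappa_0=\frac{2}{p-1}$. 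Tracking the recursion for $(c_j,a_j,b_j)$, this non-integrability forces either the constants $c_j$ to grow without bound or the lower bound to become infinite at a finite time, contradicting the assumed global existence of a classical solution.

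The step I expect to be the main obstacle is the first passage—producing the sharp lower bound for $u^0$ when both $f\not\equiv 0$ and $g\not\equiv 0$—because the gradient term $\nabla f\cdot(y-x)$ in Kirchhoff's formula (and its analogue in the Poisson formula) has indefinite sign and could cancel the positive contribution of $f$ and $g$. Controlling it by $-t|\nabla f|$ and verifying that the combination in (\ref{asm_low}) stays positive and of the correct order throughout the chosen region is exactly what makes the seed estimate, and hence the whole iteration, go through; by contrast the bootstrap itself is a routine, if delicate, integration once the correct powers of $t$ and $1+|x|$ are tracked.
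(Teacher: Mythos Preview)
Your proposal is correct and follows essentially the same route as the paper: the paper's Proposition~\ref{prop_frame_low} obtains the seed lower bound for $u^0$ by writing out Kirchhoff's (resp.\ Poisson's) formula, estimating $t\omega\cdot\nabla f\ge -t|\nabla f|$, and using $t\le 1+|x+t\omega|$ on the region $\Sigma_2=\{|x|-t\ge\max\{R,t-1\}\}$ to extract the combination $\tfrac{f}{1+|\cdot|}-|\nabla f|+g$ from (\ref{asm_low}), after which Lemma~\ref{lem_frame_low} and the standard John/Takamura iteration finish the proof. The only cosmetic differences are that the paper writes the denominator as $(1+|x|+t)^{1+\kappa}$ rather than $(1+|x|)^{1+\kappa}$ and, for $n=2$, performs the substitution $\rho=t\xi$ before differentiating in $t$; neither affects the substance of your argument.
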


Our second result addresses the  finite-time blow up of radial solutions to the Cauchy problem (\ref{2.1}).   \begin{theorem}
\label{th_2.2}
Assume the validity of Assumption \ref{ass} with  $n\geq  4$, and $u$ is a solution of (\ref{2.1}).
Then $u$ cannot exist globally in time.
\end{theorem}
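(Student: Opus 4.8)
The plan is to reduce the blow-up assertion for Theorem \ref{th_2.2} to two ingredients: a pointwise lower bound for the \emph{free} solution, and a John-type iteration fed by the nonlinear bound $F(u)\ge Au^{p}$. Let $u^{0}$ denote the solution of the homogeneous radial equation (the version of (\ref{2.1}) with $F\equiv 0$) carrying the same data $f,g$, and let $u$ be any solution of (\ref{2.1}). Using Rammaha's representation via the Riemann function \cite{R3,R7}, Duhamel's principle writes
\begin{equation*}
u(r,t)=u^{0}(r,t)+\iint_{\Lambda(r,t)}\mathcal{R}(r,t;\rho,s)\,F\bigl(u(\rho,s)\bigr)\,d\rho\,ds,
\end{equation*}
where $\Lambda(r,t)$ is the backward characteristic region and the Riemann kernel $\mathcal{R}$ is nonnegative. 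Since $F(s)\ge As^{p}\ge 0$ for $s\ge 0$, a standard continuity/induction argument using the positivity of $\mathcal{R}$ and of the free solution gives $u\ge u^{0}\ge 0$ on the forward region of interest, so it suffices to iterate the Duhamel integral downward against a lower bound.

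The core of the argument is the lower bound for $u^{0}$. I would show that there exist a constant $C>0$ and a subregion $\Sigma$ of the forward light cone (a characteristic strip on which $t-r$ stays bounded) such that
\begin{equation*}
u^{0}(r,t)\ge \frac{C}{(1+t+r)^{\kappa}}\qquad\text{on }\Sigma,
\end{equation*}
i.e.\ the free solution inherits the decay rate $\kappa$ of the data. In the odd case $n=2m+1$, Rammaha's explicit formula \cite{R3,R7} expresses $u^{0}$ as an integral of $f$, $f'$, $g$ against a nonnegative kernel; the sign conditions in (\ref{asm_odd_1})--(\ref{asm_odd_2}), in particular $-C_{1,m}f(r)/r+g(r)>0$ with $C_{1,m}=m(m-1)$, are exactly what force the integrand to stay positive and, together with $f(r)\ge C_{1}(1+r)^{-\kappa}$ or the velocity-type bound $C_{2}(1+r)^{-1-\kappa}$, produce the stated estimate.

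The main obstacle is the same lower bound in the even case $n=2m$, where $f$ and $g$ are both non-trivial. Here the representation is obtained by descent from dimension $2m+1$ and contains an Abel-type integral whose integrand is not sign-definite, so the possible cancellation between the position and velocity contributions must be controlled. My plan is to apply the change of variables (\ref{change_vari}) to recast this integral into one with a manifestly nonnegative integrand; the constant $\zeta_{m}$ furnished by Lemma \ref{lem_4.1} quantifies the resulting kernel estimate, and the combined sign condition (\ref{asm_even}), namely $-C_{2,m}f(r)/r-|f'(r)|+\tfrac12 g(r)\ge C_{3}(1+r)^{-1-\kappa}$ with $C_{2,m}=m-\tfrac38+\tfrac{5\zeta_{m}(m-1)^{2}}{3}$, is precisely the hypothesis that survives the descent and keeps $u^{0}$ bounded below by $C(1+t+r)^{-\kappa}$ on $\Sigma$. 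This is the only place where genuinely new work is needed.

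Finally, with the lower bound on $u^{0}$ in hand, I would run the iteration. Substituting $u\ge u^{0}$ into the Duhamel identity and using $F(u)\ge Au^{p}$ produces, after integrating the Riemann kernel against the running lower bound, a sequence of improving estimates $u(r,t)\ge a_{j}\,(1+t+r)^{-\kappa}$ valid on $\Sigma$, where the constants satisfy a recursion of the form $a_{j+1}\ge C^{-1}a_{j}^{\,p}D_{j}$ with a gain factor $D_{j}$ coming from a weight integral of type $\int(1+s)^{-\kappa p}(\cdots)\,ds$. The hypothesis $\kappa<\kappa_{0}=\frac{2}{p-1}$, equivalent to $\kappa(p-1)<2$, is exactly what makes this weight integral grow (rather than stay bounded) along the iteration, so that $\log a_{j}$ grows like $p^{\,j}$ and the lower bound forces $u$ to become infinite at a finite time, contradicting global existence. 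The decisive step remains the even-dimensional lower bound obtained through (\ref{change_vari}).
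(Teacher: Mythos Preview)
Your plan coincides with the paper's: the pointwise lower bound on $u^{0}$ is exactly Proposition \ref{prop_frame_high} (obtained from Rammaha's formulas together with Lemmas \ref{lem_P}, \ref{lem_Xi}, \ref{lem_4.1} and, in the even case, the change of variables (\ref{change_vari}) and Proposition \ref{prop_4.2}), which yields Lemma \ref{lem_frame_high}, after which the iteration of \cite{Jh1,Tak1} is invoked verbatim. Two small corrections are worth making: the working region is not ``a characteristic strip on which $t-r$ stays bounded'' but the exterior cone $\Sigma_{1}=\{r-t\ge\max\{R,\delta t\}\}$ of (\ref{blowup_set_1}), and under hypotheses (\ref{asm_odd_2}) or (\ref{asm_even}) the bound one actually obtains is $u^{0}(r,t)\ge C_{4}\,t\,(1+r+t)^{-1-\kappa}$ (see (\ref{u^0_odd_est_2}) and (\ref{u^0_est_2})) rather than $C(1+r+t)^{-\kappa}$---the extra factor $t$ is harmless for the iteration but cannot be dropped.
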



\begin{remark}\label{rmk2}
Let us note here that our assumption on the initial data in  
(\ref{asm_odd_1}), (\ref{asm_odd_2}) and (\ref{asm_even}) are fairly reasonable  in view of the slowly decaying initial data (see for instance (\ref{condi:cglobal_rad}) or remark 2.1 in \cite{Tak2}). 
In fact, there is a large family of the slowly decaying initial data that satisfies  the general conditions in Assumption \ref{ass}.
\end{remark}

The paper is organized as follows. In the next section, we illustrate our iteration schemes, which are sufficient to prove Theorem \ref{th_2.1} and Theorem \ref{th_2.2}. 
Section \ref{sec3} is devoted to the treatment of high odd dimensions. In Section \ref{sec4}, we derive the required lower bound in high even dimensions, which is the more technical part of the paper.  Finally, Section \ref{sec5} gives a brief treatment of the low dimensions $n=2, 3$.
\section{Iteration Scheme} \label{sec2}
In this section, we introduce our iteration scheme that allows us to prove the Theorem \ref{th_2.1} and Theorem \ref{th_2.2}, following the  well-known arguments in  \cite{Jh1} or \cite{Tak1}.  Throughout the paper, we define $\delta$ (which depends of the space dimensions $n$) by:
\begin{equation}\label{def-delta}
\delta:= \max\Big\{\frac{2}{\eta_m},\,\, \frac{2}{\zeta_m} \Big\},
\end{equation}
where $\eta_m,\,\,\zeta_m >0 $ are given below in Lemma \ref{lem_P} and Lemma \ref{lem_4.1}; respectively.
\begin{lemma}\label{lem_frame_high}
Let $u$ be a solution of (\ref{2.1}) where $n=2m+1$ or $n=2m$, $m=2,3,4\cdots$. 
Then, with the validity of  Assumption \ref{ass}, we have: 
\begin{align}
\label{frame_high}
u(r,t)\ge \frac{C_4t}{(1+r+t)^{1+\kappa}}+\frac{1}{8r^m}\int_{0}^{t}d\tau
\int_{r-t+\tau}^{r+t-\tau}\lambda^mF(u(\lambda,\tau))d\lambda,
\end{align}
for all $(r,t)\in \Sigma_1$, where 
\begin{align}
\label{blowup_set_1}
\Sigma_1:=\left\{(r,t)\in(0,\infty)^2\ :\ r-t\ge
\max\left\{R,\,\,\delta t\right\}>0\right\}
\end{align}
and  $C_4$  is a  positive constant.
\end{lemma}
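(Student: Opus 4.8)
The plan is to begin from the exact Duhamel decomposition of the radial solution, writing $u=u_0+u_N$, where $u_0$ is the \emph{free} solution carrying the data $f,g$ and $u_N$ is the Duhamel term generated by the source $F(u)$. Regarding $F(u)$ as a prescribed forcing makes (\ref{2.1}) linear in $u$, so this splitting is exact, and the point-wise formulas of Rammaha \cite{R3,R7} express both pieces through the Riemann function of the radial wave operator. I would then identify the two summands on the right-hand side of (\ref{frame_high}) as lower bounds for $u_0$ and $u_N$, respectively, and add them.

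For the free part I would simply invoke Lemma \ref{lem_P} (when $n=2m+1$) and Lemma \ref{lem_4.1} (when $n=2m$): under the conditions (\ref{asm_odd_1})--(\ref{asm_even}) of Assumption \ref{ass}, these furnish the point-wise bound $u_0(r,t)\ge C_4 t(1+r+t)^{-1-\kappa}$. The constants $\eta_m,\zeta_m$ in those lemmas govern the cone in which such an estimate is valid, which is exactly why $\delta$ is chosen as $\max\{2/\eta_m,2/\zeta_m\}$ in (\ref{def-delta}) and why $\Sigma_1$ is cut out by $r-t\ge\delta t$: on $\Sigma_1$ the point $(r,t)$ lies inside the region where the free-solution estimate of these lemmas applies.

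For the nonlinear part the key geometric observation is that $\Sigma_1$ is closed under passing to backward characteristics: if $(r,t)\in\Sigma_1$ and $0\le\tau\le t$, $r-t+\tau\le\lambda\le r+t-\tau$, then $\lambda-\tau\ge r-t\ge\max\{R,\delta\tau\}$, so $(\lambda,\tau)\in\overline{\Sigma_1}$. In particular the entire backward light cone stays away from the spatial origin $\lambda=0$, which removes the singular behaviour of the Riemann function there and lets the representation be written without reflection terms. Using that the Riemann kernel is nonnegative and, on this cone, bounded below by $\lambda^m/(8r^m)$ --- here one exploits that $r-t\ge\delta t$ forces $\lambda/r$ into a fixed compact subinterval of $(0,\infty)$, so the kernel's dependence on $\lambda/r$ is controlled by the absolute constant $1/8$ --- I would obtain $u_N(r,t)\ge\frac{1}{8r^m}\int_0^t\!\!\int_{r-t+\tau}^{r+t-\tau}\lambda^m F(u(\lambda,\tau))\,d\lambda\,d\tau$. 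This step uses $F(u)\ge0$ on the cone; for $F(u)=|u|^p$ it is automatic, and for $F(u)=|u|^{p-1}u$ it follows because the downward-cone invariance of $\Sigma_1$ together with the strictly positive free bound yields $u\ge u_0>0$ throughout the cone. Adding the two estimates gives (\ref{frame_high}).

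I expect the main obstacle to be the kernel estimate $K\ge\lambda^m/(8r^m)$ uniformly across both parities: the Riemann function of the radial operator has genuinely different forms for $n=2m+1$ and $n=2m$, and one must verify that, after restricting to $\Sigma_1$ where $\lambda$ is comparable to $r$, both collapse to the same weight with the stated constant. Securing the positivity of $u$ on $\Sigma_1$ in the case $F(u)=|u|^{p-1}u$ is the secondary delicate point, which the causal stability of $\Sigma_1$ is precisely designed to handle.
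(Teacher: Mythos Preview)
Your Duhamel split $u=u_0+u_N$ and the treatment of the nonlinear piece via a kernel lower bound $\ge\lambda^m/(8r^m)$ on $\Sigma_1$ match the paper's route exactly; the paper simply defers that step to Lemma~2.1 in \cite{KuroTak} and Lemmas~2.6, 2.9 in \cite{Tak1}. Your cone-invariance observation for $\Sigma_1$ and the positivity argument for $u$ are also on target.

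The gap is in the free part, and it is precisely the place the paper flags as the main contribution. Lemmas~\ref{lem_P} and~\ref{lem_4.1} do \emph{not} furnish the bound $u_0(r,t)\ge C_4 t(1+r+t)^{-1-\kappa}$: they are bare statements about Legendre and Tchebyscheff polynomials near $z=1$ (namely $P_{m-1}\ge\tfrac12$, $0<P_{m-1}'\le\tfrac12 m(m-1)$, and the analogues for $T_{m-1}$). What you actually need is Proposition~\ref{prop_frame_high}. In odd dimensions that proposition does follow rather directly from the representation (\ref{u^0_odd_1}) together with Lemma~\ref{lem_P}, Lemma~\ref{lem_Xi} and the data assumptions, so there your sketch is essentially right. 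In even dimensions, however, the representation (\ref{u_0_even}) carries a time derivative $\partial_t I(r,t,f)$, and bounding that from below is the heart of the matter: the paper introduces the change of variables (\ref{change_vari}) to rewrite $I$ in the limits-free form (\ref{I_new}), and then proves Proposition~\ref{prop_4.2} to control $\partial_t\{K\,w\,T_{m-1}(\Theta)\}$ through a term-by-term analysis of the five pieces $I_1,\dots,I_5$ in (\ref{I_i}), including a delicate sign-and-size estimate (\ref{variable_est}) for $\partial_t\Theta$. None of this can be replaced by a direct appeal to the polynomial lemmas, and your proposal gives no hint of how this step would go. Relatedly, you identify the Duhamel kernel bound as the ``main obstacle'', whereas in the paper that is the routine cited part and the even-dimensional free estimate is the genuine difficulty.
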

\begin{lemma}\label{lem_frame_low}
Let $u$ be a solution of (\ref{1.1}) with $n=2$ or $n=3$, and  Assumption \ref{ass} is valid. 
Then we have:
\begin{align}
\label{frame_low}
u(x,t)\ge \frac{C_5t}{(1+|x|+t)^{1+\kappa}}+\int_{0}^{t}R(F(u(\cdot,\tau))|x,t-\tau)d\tau,
\end{align}
for all $(x,t)\in \Sigma_2$, where $C_5$ is a positive constant,
\begin{align}
\label{riemman}
R(\phi|x,t):=
\left\{
\begin{array}{ll}
\d\frac{t}{4\pi}\int_{|\omega|=1}\phi(x+t\omega)dS_{\omega}
&\mbox{for}\ n=3, \vspace{.1in}\\
\d\frac{1}{2\pi}\int_{0}^{t}\frac{\rho d\rho}{\sqrt{t^2-\rho^2}}
\int_{|\omega|=1}\phi(x+\rho\omega)dS_{\omega}
&\mbox{for}\ n=2,
\end{array}
\right.
\end{align}
and
\begin{align}
\label{blowup_set_2}
\Sigma_2:=\left\{(x,t)\in\R^n\times(0,\infty)\ :\ |x|-t\ge
\max\{R,t-1\}\right\}.
\end{align}
\end{lemma}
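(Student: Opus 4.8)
The plan is to combine Duhamel's principle with a pointwise lower bound on the \emph{free} solution, reducing the whole statement to an estimate on the free part. Since $u$ is a classical solution of (\ref{1.1}) with $n=2,3$, the representation formulas for the wave equation give the identity
\begin{equation*}
u(x,t)=\partial_t R(f|x,t)+R(g|x,t)+\int_0^t R(F(u(\cdot,\tau))|x,t-\tau)\,d\tau,
\end{equation*}
with $R$ as in (\ref{riemman}). The last term is exactly the integral on the right-hand side of (\ref{frame_low}), so it suffices to prove that the free solution $u_0:=\partial_t R(f|x,t)+R(g|x,t)$ satisfies $u_0(x,t)\ge C_5 t\,(1+|x|+t)^{-1-\kappa}$ on $\Sigma_2$.

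The first step is to compute $\partial_t R(f|x,t)$. For $n=3$ this is a direct differentiation of the spherical mean, giving $\partial_t R(f|x,t)=\tfrac1t R(f|x,t)+\tfrac{t}{4\pi}\int_{|\omega|=1}\nabla f(x+t\omega)\cdot\omega\,dS_\omega$. For $n=2$ the weight $\sqrt{t^2-\rho^2}$ in (\ref{riemman}) is singular at the upper endpoint, so I would first move the $t$-dependence out of the singularity by the substitution $\rho=ts$ (i.e. $y=tz$), differentiate the resulting regular integral, and change variables back; this produces the analogous identity $\partial_t R(f|x,t)=\tfrac1t R(f|x,t)+(\text{gradient term})$, in which the surviving gradient term carries the extra factor $\rho^2/t\le\rho$.

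The heart of the matter is then three elementary pointwise estimates, all of which rely on $(x,t)\in\Sigma_2$. First, $\nabla f\cdot\omega\ge-|\nabla f|$ together with $\rho^2/t\le\rho$ bounds the gradient term below by $-R(|\nabla f|\,|\,x,t)$. Second, on $\Sigma_2$ one has $|x|-t\ge t-1$, so every integration point $y=x+\rho\omega$ with $0\le\rho\le t$ obeys $1+|y|\ge 1+|x|-t\ge t$, i.e. $\tfrac1t\ge(1+|y|)^{-1}$; multiplying by $f(y)>0$ converts the leading term $\tfrac1t R(f|x,t)$ into $R\big(\tfrac{f}{1+|\cdot|}\,\big|\,x,t\big)$. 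Combining these with $R(g|x,t)$ and the monotonicity of the positive averaging operator $R$ yields
\begin{equation*}
u_0(x,t)\ge R\!\left(\frac{f}{1+|\cdot|}-|\nabla f|+g\ \Big|\ x,t\right).
\end{equation*}
Third, since $|x|-t\ge R$ forces $|y|\ge R$ at all integration points, the lower bound in (\ref{asm_low}) applies to the integrand, and then $|y|\le|x|+t$ gives $(1+|y|)^{-1-\kappa}\ge(1+|x|+t)^{-1-\kappa}$; pulling this constant out and using $R(1|x,t)=t$ (a one-line evaluation of (\ref{riemman}) in both dimensions) produces the claimed bound, e.g. with $C_5=C_0$.

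The main obstacle is the $n=2$ computation of $\partial_t R(f|x,t)$: the $\sqrt{t^2-\rho^2}$ weight forbids naive differentiation under the integral sign, and one must take care that after the change of variables the leftover gradient term comes with the harmless factor $\rho^2/t\le\rho$, rather than a factor that would spoil the comparison with $R(|\nabla f|\,|\,x,t)$. Beyond that, the argument is driven entirely by the geometry of $\Sigma_2$: the constraint $|x|-t\ge t-1$ is precisely what makes $\tfrac1t\ge(1+|y|)^{-1}$ valid throughout the backward domain of dependence, while $|x|-t\ge R$ is precisely what places that domain inside the region where the structural assumption (\ref{asm_low}) on $\{f,g\}$ is available.
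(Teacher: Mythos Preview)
Your proposal is correct and follows essentially the same route as the paper: Duhamel's formula reduces the claim to a lower bound on the free solution $u^0=\partial_tR(f|x,t)+R(g|x,t)$, which is then obtained exactly as you describe---in particular, the paper also removes the $n=2$ endpoint singularity via $\rho=t\xi$ before differentiating (yielding the harmless extra factor $\xi\le1$, i.e.\ your $\rho^2/t\le\rho$), and then uses $t\le1+|x+\rho\omega|$ on $\Sigma_2$ together with (\ref{asm_low}) and $R(1|x,t)=t$ to conclude. The only cosmetic difference is that the paper writes the intermediate bound integrand-by-integrand (Proposition~\ref{prop_frame_low}) rather than in operator form $R\big(\tfrac{f}{1+|\cdot|}-|\nabla f|+g\,\big|\,x,t\big)$.
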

As we mentioned earlier, by appealing to iteration arguments in \cite{Jh1} or \cite{Tak1} along with  Lemma \ref{lem_frame_high} and Lemma \ref{lem_frame_low}, 
one can prove the Theorems \ref{th_2.1} and Theorems \ref{th_2.2}. 
\smallskip

The proofs of the above lemmas are provided below. First, let $u^0$ denotes the free
solution of the wave equation. More precisely, $u^0$ is the solution of the Cauchy problem:
\begin{align}\label{u^0_low}
\begin{cases}
u_{tt}^0-\Delta u^0=0, \mbox{    in   }\reals^n\times[0,\infty),\vspace{.1in}\\
u^0(x,0)=f(x),\ u_t^0(x,0)=g(x), \mbox{    in   }\reals^n,
\end{cases}
\end{align}
if $n=2$ or $n=3$ (no radial symmetry is assumed), and 
for $n\ge 4$, $u^0$ is the solution of  the following radially symmetric version of (\ref{u^0_low}):
\begin{align}
\label{u^0_high}
\begin{cases}
u_{tt}^0-\d\frac{n-1}{r}u_r^0-u_{rr}^0=0, \mbox{    in   } (0,\infty)\times[0,\infty),\vspace{.1in} \\
u^0(r,0)=f(r),\ u_t^0(r,0)=g(r), \mbox{    in   } (0,\infty).
\end{cases}
\end{align}
Then, we have the following results.
\begin{proposition}\label{prop_frame_high}
Let $u^0$ be the solution of (\ref{u^0_high}) with $n=2m+1$ or $n=2m$, $m=2,3,4,\cdots$. 
With the validity of  Assumption \ref{ass}, then 
$u^0$ satisfies:
\begin{align}\label{u^0_est_pre_odd}
u^0(r,t) &\ge
\frac{1}{2r^m}\Big\{f(r+t)(r+t)^m+f(r-t)(r-t)^m\Big\} \notag\\
&\quad+\frac{1}{4r^m}\int_{r-t}^{r+t}\lambda^m\left(-C_{1,m}\frac{f(\lambda)}{\lambda}
+g(\lambda)\right)d\lambda
\end{align}
for all $(r,t)\in \Sigma_1$, if $n=2m+1$, and
\begin{align}\label{u^0_est_pre_even}
u^0(r,t)
&\d\ge \frac{1}{\pi r^{m-1}}\int_0^t\frac{\rho d\eta}{\sqrt{t^2-\rho^2}}
\int_{r-\rho}^{r+\rho}\left\{-2C_{2,m}\frac{f(\lambda)}{\lambda}
-2|f'(\lambda)|+g(\lambda)\right\}\notag\\
&\quad\times\frac{\lambda^md\lambda}{\sqrt{\lambda^2-(r-\rho)^2} 
\sqrt{(r+\rho)^2-\lambda^2}}
\end{align}
for all $(r,t)\in \Sigma_1$, if $n=2m$.
\end{proposition}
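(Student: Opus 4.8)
The plan is to reduce the radial equation (\ref{u^0_high}) to a one-dimensional problem and then invoke an explicit Riemann-function representation, in the spirit of the formulas of Rammaha \cite{R3,R7}. In the odd case $n=2m+1$, setting $v=r^mu^0$ absorbs the first-order term and, after a direct computation, yields the one-dimensional Klein-Gordon type equation
\begin{align*}
v_{tt}-v_{rr}+\frac{m(m-1)}{r^2}\,v=0,
\end{align*}
with Cauchy data $v(r,0)=r^mf(r)$, $v_t(r,0)=r^mg(r)$. Since the resulting coefficient $m(m-1)$ is exactly the constant $C_{1,m}$ of (\ref{u^0_est_pre_odd}), this already signals that the term $-C_{1,m}f/\lambda$ must originate from the inverse-square potential.

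For the odd case I would then represent $v$ by Riemann's method on the characteristic triangle with apex $(r,t)$ and base $[r-t,r+t]$. The Riemann function equals $1$ along the characteristics, so the two endpoint contributions reproduce $\tfrac12\{(r+t)^mf(r+t)+(r-t)^mf(r-t)\}$, which upon division by $r^m$ is precisely the leading line of (\ref{u^0_est_pre_odd}). The essential quantitative point is that on $\Sigma_1$, where $r-t\ge\delta t$ forces $r$ to dominate $t$, the potential $m(m-1)/r^2$ is a small perturbation of the free wave operator, so that the Riemann function satisfies $R\ge\tfrac12$ and its $\tau$-derivative is controlled; this is the role of the constant $\eta_m$ of Lemma \ref{lem_P} in the definition (\ref{def-delta}) of $\delta$. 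Using $g\ge0$ from Assumption \ref{ass}, bounding the velocity contribution by $R\ge\tfrac12$ produces the factor $\tfrac14$, while integrating by parts the contribution of $R_\tau$ against the position data yields the remaining $-C_{1,m}f/\lambda$ term.

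The even case $n=2m$ is the substantive one. Here the weight that removes the first-order term is the half-integer power $r^{m-1/2}$ and still leaves a genuine potential, so no d'Alembert formula is available; instead I would represent the $2m$-dimensional radial solution through the two-dimensional descent (Abel) kernel $\int_0^t\rho(t^2-\rho^2)^{-1/2}\,d\rho$ appearing already in (\ref{riemman}), and reduce the accompanying spherical mean of the radial data to the one-dimensional kernel $\lambda^m[(\lambda^2-(r-\rho)^2)((r+\rho)^2-\lambda^2)]^{-1/2}$, thereby accounting for the full integrand of (\ref{u^0_est_pre_even}). The velocity contribution of $g$ is non-negative and passes through this reduction directly; the difficulty lies entirely in the position contribution of $f$, which enters through a time derivative of the singular descent integral and is not manifestly of one sign.

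The hard part will therefore be to extract a usable lower bound from the $f$-contribution in the even case. Differentiating the singular integral in $t$ produces both a boundary term and a principal-value interior term, and controlling their combined sign is exactly what forces the special change of variables (\ref{change_vari}). I expect that after this substitution the derivative can be moved onto $f$ by integration by parts, generating the $-2|f'(\lambda)|$ term together with the lower-order factor $-2C_{2,m}f(\lambda)/\lambda$, where the constant $C_{2,m}=m-\tfrac38+\tfrac{5\zeta_m(m-1)^2}{3}$ and the threshold $\zeta_m$ of Lemma \ref{lem_4.1} are precisely what is needed to dominate the singular kernel uniformly on $\Sigma_1$; this is also what fixes the second entry in (\ref{def-delta}). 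Discarding the remaining non-negative terms then yields (\ref{u^0_est_pre_even}).
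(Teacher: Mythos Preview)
Your plan is essentially the paper's approach. In the odd case the Riemann function you invoke \emph{is} the Legendre kernel $P_{m-1}(\Theta(\lambda,r,t))$ of \cite{R3}, and the paper proceeds exactly as you describe: the boundary values of $P_{m-1}$ give the first line of (\ref{u^0_est_pre_odd}), and the bounds $P_{m-1}\ge\tfrac12$, $0<P'_{m-1}\le\tfrac12 m(m-1)$ from Lemma~\ref{lem_P}, valid because $\Theta\ge\delta/(\delta+2)$ on $\Sigma_1$, produce the integral term. One small correction: the $-C_{1,m}f/\lambda$ piece does not come from an integration by parts but simply from bounding $|P'_{m-1}(\Theta)\,\partial_t\Theta|=P'_{m-1}(\Theta)\,t/(r\lambda)\le C_{1,m}/(2\lambda)$ pointwise.

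For the even case your outline is right in structure---the representation is through the Tchebyscheff kernel $T_{m-1}(\Theta)$ and the two-dimensional descent factor $(t^2-\rho^2)^{-1/2}$, and the change of variables (\ref{change_vari}) is indeed the key device---but the mechanism you describe is slightly off. After (\ref{change_vari}) the integration domain becomes the fixed square $[0,1]^2$, so $\partial_t$ passes under the integral sign and the $f'$ term arises from the \emph{chain rule} applied to $f(r+t\eta-2t\eta\xi)$, not from an integration by parts. The paper packages this as Proposition~\ref{prop_4.2}: one differentiates $K\cdot f\cdot T_{m-1}$ in $t$ by the product rule, obtaining five pieces $I_1,\ldots,I_5$, and bounds each pointwise. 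The term you should anticipate as most delicate is $I_5$, the contribution of $T'_{m-1}(\Theta)\,\partial_t\Theta$; the paper shows $-\tfrac{5\zeta_m}{3\lambda}\le\partial_t\Theta\le 0$ on $\Sigma_1$ by locating the minimum of an explicit cubic in $\xi$, and this is precisely what manufactures the constant $\tfrac{5\zeta_m(m-1)^2}{3}$ inside $C_{2,m}$. Once you replace ``integration by parts'' with ``product and chain rule after freezing the domain,'' your plan matches the paper.
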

\begin{proposition}\label{prop_frame_low}
Let $n=2$ or $n=3$, $u^0$ be the solution of (\ref{u^0_low}) and Assumption \ref{ass} is valid. 
Then, $u^0$ satisfies: 
\begin{align}\label{u^0_est_pre_3}
u^0(x,t) \ge
\frac{t}{4\pi}\int_{|\omega|=1}\left\{
\frac{f(x+t\omega)}{1+|x+t\omega|}-|\nabla f(x+t\omega)|
+g(x+t\omega)\right\}dS_{\omega}
\end{align}
for all $(x,t)\in \Sigma_2$, if $n=3$,
\begin{align}\label{u^0_est_pre_2}
u^0(x,t) &\ge
\frac{1}{2\pi}\int_{0}^{t}\frac{\rho d\rho}{\sqrt{t^2-\rho^2}}\notag\\
&\quad\times\int_{|\omega|=1}\left\{\frac{f(x+\rho\omega)}{1+|x+\rho\omega|}-|\nabla f(x+\rho\omega)|
+g(x+\rho\omega)\right\}dS_{\omega}
\end{align}
for all $(x,t)\in \Sigma_2$, if $n=2$.
\end{proposition}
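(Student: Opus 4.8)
The goal is to prove Proposition~\ref{prop_frame_low}, i.e.\ to derive pointwise lower bounds for the free solution $u^0$ of the homogeneous wave equation in dimensions $n=3$ and $n=2$, valid on the exterior light-cone region $\Sigma_2$. The plan is to start from the classical Kirchhoff/Poisson representation formulas for $u^0$, then transfer all the $t$-derivatives off the Cauchy datum $f$ and onto geometric factors, and finally discard non-negative terms to reach the stated inequalities.

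First I would treat $n=3$, since it is the cleaner model. Recall Kirchhoff's formula
\begin{align*}
u^0(x,t)=\frac{\partial}{\partial t}\left(\frac{t}{4\pi}\int_{|\omega|=1}f(x+t\omega)\,dS_\omega\right)
+\frac{t}{4\pi}\int_{|\omega|=1}g(x+t\omega)\,dS_\omega.
\end{align*}
The term carrying $g$ already matches the target. The difficulty is the time-derivative acting on the $f$-spherical mean. Differentiating $t$ times a spherical average produces a term of the form $\tfrac{1}{4\pi}\int f(x+t\omega)\,dS_\omega$ plus a term $\tfrac{t}{4\pi}\int \nabla f(x+t\omega)\cdot\omega\,dS_\omega$. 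The plan is to bound the first of these from below using the elementary estimate $f(y)\ge \frac{|y|}{1+|y|}\cdot\frac{f(y)}{|y|}$ together with the observation that on $\Sigma_2$ one has $|x+t\omega|$ comparable to $t$, and to bound the gradient term from below by $-\,\tfrac{t}{4\pi}\int|\nabla f(x+t\omega)|\,dS_\omega$ via Cauchy--Schwarz on $S^2$. Collecting the three contributions and factoring out $t/4\pi$ should yield exactly \eqref{u^0_est_pre_3}; the $(1+|x+t\omega|)^{-1}$ weight on $f$ is precisely what emerges from the comparison $f\ge \tfrac{f}{1+|\cdot|}$ after the first spherical-mean term is absorbed.

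For $n=2$, I would obtain the bound by descent from $n=3$, which is the standard way the $n=2$ Poisson formula arises and is consistent with the appearance of $R(\phi|x,t)$ in \eqref{riemman}. The two-dimensional free solution can be written as
\begin{align*}
u^0(x,t)=\frac{1}{2\pi}\int_0^t\frac{\rho}{\sqrt{t^2-\rho^2}}
\left(\frac{\partial}{\partial\rho}\frac{1}{\rho}\int_{|\omega|=1}\cdots\right)d\rho,
\end{align*}
and applying the same three-fold decomposition of the $\rho$-derivative on $f$ inside the integral, then discarding the positive boundary contributions and invoking the $n=3$ pointwise estimate for the inner spherical means, gives \eqref{u^0_est_pre_2} with the extra weighted $\rho$-integration intact. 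The key structural point is that the Abel-type kernel $\rho/\sqrt{t^2-\rho^2}$ is non-negative, so all the inequalities established fiberwise in the inner variable survive integration.

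The main obstacle I anticipate is the careful handling of the gradient term in the $f$-derivative and the passage from the raw first spherical-mean term to the weighted form $f/(1+|\cdot|)$: one must verify that on $\Sigma_2$ the geometric comparison $|x+t\omega|\asymp t$ (coming from $|x|-t\ge t-1$) is strong enough to absorb the undifferentiated $f$-mean into the weighted expression without losing sign, and that the boundary terms produced by the descent and by the derivative acting on $t$ (rather than on $f$) are non-negative under Assumption~\ref{ass} (in particular $f>0$ for large argument). The gradient estimate itself is only Cauchy--Schwarz, hence routine, but keeping every discarded term non-negative is where the geometry of the region $\Sigma_2$ is essential.
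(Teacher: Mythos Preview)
Your $n=3$ argument is essentially the paper's: differentiate Kirchhoff's formula to obtain
\[
u^0(x,t)=\frac{1}{4\pi}\int_{|\omega|=1}\bigl\{f+t\,\omega\cdot\nabla f+tg\bigr\}(x+t\omega)\,dS_\omega,
\]
use $\omega\cdot\nabla f\ge -|\nabla f|$ (this is just the pointwise Cauchy--Schwarz inequality in $\reals^3$, not anything on $S^2$), and then replace $f/t$ by $f/(1+|x+t\omega|)$ via the inequality $t\le 1+|x+t\omega|$, which follows from $|x|-t\ge t-1$ on $\Sigma_2$. Your phrasing of this last step (``$|x+t\omega|$ comparable to $t$'') is looser than what is actually needed and used, but the content is the same.

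The $n=2$ part has a genuine gap. The displayed formula you wrote for $u^0$ with a $\partial/\partial\rho$ inside the $\rho$-integral is not a correct representation of the free solution, and your plan to ``discard positive boundary contributions'' is not a workable route: if you differentiate $R(f|x,t)=\tfrac{1}{2\pi}\int_0^t \tfrac{\rho}{\sqrt{t^2-\rho^2}}\bigl(\int_{|\omega|=1}f(x+\rho\omega)\,dS_\omega\bigr)d\rho$ in $t$ directly, the derivative hits both the singular kernel $(t^2-\rho^2)^{-1/2}$ and the upper limit $\rho=t$, producing non-integrable terms rather than clean boundary contributions you can sign and drop. The paper avoids this entirely by the change of variables $\rho=t\xi$, which rewrites
\[
R(\phi|x,t)=\frac{t}{2\pi}\int_0^1\frac{\xi\,d\xi}{\sqrt{1-\xi^2}}\int_{|\omega|=1}\phi(x+t\xi\omega)\,dS_\omega,
\]
so that $t$ appears only as a prefactor and inside the argument of $\phi$. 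Then $\partial_t$ is a harmless product rule, yielding exactly $\int\{f+t\xi\,\omega\cdot\nabla f+tg\}$ as in three dimensions, and the same two inequalities ($\omega\cdot\nabla f\ge -|\nabla f|$ and $t\le 1+|x+t\xi\omega|$ on $\Sigma_2$) finish the job after returning to the original variable. This substitution is the missing idea in your proposal; without it, the $n=2$ computation does not go through as you describe.
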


It is important to  note here that the proofs of Lemma \ref{lem_frame_high} and Lemma \ref{lem_frame_low} 
follow from Propositions \ref{prop_frame_high} and \ref{prop_frame_low},  and by appealing to the proofs of Lemma 2.1 in \cite{KuroTak} or Lemma 2.6 and lemma 2.9 in \cite{Tak1}. 
For this very reason,  the remaining parts of the paper are devoted only to the proofs of Proposition \ref{prop_frame_high} and Proposition \ref{prop_frame_low}. 

\section{High Odd Dimensions: $n=5,7,9,\cdots$}\label{sec3}
{\bf Proof of proposition \ref{prop_frame_high} in \boldmath{$n=2m+1,\ m=2,3,4,\cdots$.}} 
According to formula (6a) in \cite{R3}, we have 
\begin{align}
\label{u^0_odd_1}
\begin{array}{ll}
u^0(r,t)=
&\d\frac{\p}{\p t}\left\{\frac{1}{2r^m}\int_{|r-t|}^{r+t}\lambda^mf(\lambda)P_{m-1}
\left(\Theta(\lambda,r,t)\right)d\lambda\right\}\\
&\d+\frac{1}{2r^m}\int_{|r-t|}^{r+t}\lambda^mg(\lambda)P_{m-1}
\left(\Theta(\lambda,r,t)\right)d\lambda,
\end{array}
\end{align}
where $P_k$ denotes Legendre polynomials of degree $k$ defined by
\begin{align}
\label{P}
P_k(z):=\frac{1}{2^kk!}\frac{d^k}{dz^k}(z^2-1)^k,
\end{align} 
and $\Theta=\Theta(\lambda,r,t)$ is given by 
\begin{align}
\label{Xi}
\Theta(\lambda,r,t)=\frac{\lambda^2+r^2-t^2}{2r\lambda}.
\end{align}
The following auxiliary lemma will be needed in the  derivation of  the required estimate in this case. 

\begin{lemma}
\label{lem_P}
For $m=2,3,4,\cdots$, there exists a positive constant $\eta_m$, depending only on 
$m$, such that 
\begin{align}\label{est_P}
P_{m-1}(z)\ge\frac{1}{2}\quad\mbox{and}\quad0<P_{m-1}'(z)\le\frac{1}{2}m(m-1),
\quad\mbox{for}\quad
\frac{1}{1+\eta_m}\le z\le 1.
\end{align}
\end{lemma}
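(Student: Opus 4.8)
The plan is to reduce everything to the values of $P_{m-1}$ and its first two derivatives at the single point $z=1$, and then propagate the estimates to a one-sided neighborhood by continuity. The three classical quantities I would use are
\[
P_{m-1}(1)=1,\qquad P_{m-1}'(1)=\tfrac12 m(m-1),\qquad P_{m-1}''(1)=\tfrac18 (m+1)m(m-1)(m-2),
\]
where the last two follow from the general formula $P_n^{(k)}(1)=\frac{(n+k)!}{2^k k!(n-k)!}$ with $n=m-1$ (for the first derivative one may instead evaluate Legendre's equation $(1-z^2)P_n''-2zP_n'+n(n+1)P_n=0$ at $z=1$). The key observation is that $P_{m-1}''(1)\ge0$ for every $m\ge2$, with equality precisely when $m=2$.

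First I would dispose of the two soft bounds. Since $P_{m-1}$ is a polynomial, hence smooth, and $P_{m-1}(1)=1>\tfrac12$, there is $\delta_1\in(0,\tfrac12]$ with $P_{m-1}(z)\ge\tfrac12$ on $[1-\delta_1,1]$; and since $P_{m-1}'(1)=\tfrac12 m(m-1)>0$, there is $\delta_2\in(0,\tfrac12]$ with $P_{m-1}'(z)>0$ on $[1-\delta_2,1]$. Both are immediate from continuity, because in each case the strict inequality already holds at the endpoint $z=1$.

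The crux is the upper bound $P_{m-1}'(z)\le\tfrac12 m(m-1)$, which cannot follow from continuity alone, since it is attained with equality at $z=1$; here the sign of the second derivative is decisive. For $m\ge3$ one has $P_{m-1}''(1)>0$, so by continuity $P_{m-1}''>0$ on some interval $[1-\delta_3,1]$ with $\delta_3\in(0,\tfrac12]$; hence $P_{m-1}'$ is strictly increasing there and $P_{m-1}'(z)\le P_{m-1}'(1)=\tfrac12 m(m-1)$ on $[1-\delta_3,1]$. For $m=2$ one simply notes $P_1(z)=z$, so $P_1'(z)\equiv1=\tfrac12\cdot2\cdot1$ and the bound holds with equality throughout; take $\delta_3=\tfrac12$.

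Finally I would set $\delta:=\min\{\delta_1,\delta_2,\delta_3\}\in(0,\tfrac12]$ and choose $\eta_m:=\frac{\delta}{1-\delta}>0$, so that $\frac{1}{1+\eta_m}=1-\delta$; then all three estimates hold simultaneously on $[\frac{1}{1+\eta_m},1]$, and $\eta_m$ depends only on $m$, as required. The only genuine obstacle is the tight derivative bound: the point is that the constant $\tfrac12 m(m-1)$ is not arbitrary but equals $P_{m-1}'(1)$ exactly, so the inequality must be proved by supplying the monotonicity of $P_{m-1}'$ near $z=1$ through $P_{m-1}''(1)\ge0$, rather than by a soft continuity argument.
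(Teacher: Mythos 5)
Your proof is correct and follows essentially the same route as the paper: handle $m=2$ separately via $P_1(z)=z$, and for $m\ge3$ use $P_{m-1}(1)=1$, $P_{m-1}'(1)=\tfrac12 m(m-1)$ and $P_{m-1}''(1)>0$ together with continuity, the positivity of the second derivative supplying the monotonicity of $P_{m-1}'$ that yields the tight upper bound. Your write-up merely makes explicit the step the paper compresses into ``by continuity there exists $\eta_m$.''
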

\par\noindent
\begin{proof}
Let us first consider the case of $m=2$. Then, we easily obtain (\ref{est_P}) 
by putting $\eta_m=1$, since $P_{1}(z)=z$. 
Now, suppose that $m\ge3$. Then, by direct computations, we have the following properties of $P_k$:
\begin{align}
\label{val_P_1}
P_{m-1}'(1)=\frac{1}{2} m(m-1)>0,
\end{align}
and
\begin{align}
\label{val_P_2}
P_{m-1}''(1)=\frac{1}{4} (m-1)(m-2){m+1\atopwithdelims()m-1}>0.
\end{align}
Since  $P_1(z)=z$ and $P_m(1)=1$, then it follows from (\ref{val_P_1}), (\ref{val_P_2}) and 
the continuity of $P_{m-1}(z)$, $P'_{m-1}(z)$ and $P''_{m-1}(z)$ that there exists a $\eta_m>0$ such that (\ref{est_P}) is valid.
\end{proof}
To use the lemma \ref{lem_P} with $\Theta$ which is a variable of $P_{m-1}$ or $P'_{m-1}$ in (\ref{u^0_odd_1}), 
we need following lemma. 
\begin{lemma}
\label{lem_Xi}
Let $\Theta$ be the function defined by (\ref{Xi}). Then, $\Theta$ satisfies 
\begin{align}\label{est_Xi}
\Theta(\lambda,r,t)\ge \frac{\delta}{\delta+2}
\quad\mbox{for}\quad r-t\le \lambda \le r+t
\end{align}
provided $(r,t)\in \Sigma_1$. 
\end{lemma}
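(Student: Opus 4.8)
The plan is to reduce the two-sided constraint $r-t\le\lambda\le r+t$ to a single sharp lower bound for $\Theta$ that is uniform in $\lambda$, and then to convert the geometric condition defining $\Sigma_1$ into a bound on the ratio $t/r$. First I would rewrite the defining expression (\ref{Xi}) as a sum of two positive terms,
\[
\Theta(\lambda,r,t)=\frac{\lambda}{2r}+\frac{r^2-t^2}{2r\lambda},
\]
noting that on $\Sigma_1$ one has $r-t\ge \delta t>0$, hence $r>t$ and $r^2-t^2>0$, so both summands are strictly positive for every $\lambda>0$.

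The key step is an application of the arithmetic–geometric mean inequality to these two terms, which yields
\[
\Theta(\lambda,r,t)\ge 2\sqrt{\frac{\lambda}{2r}\cdot\frac{r^2-t^2}{2r\lambda}}
=\frac{\sqrt{r^2-t^2}}{r}=\sqrt{1-\Big(\frac{t}{r}\Big)^2},
\]
valid for all $\lambda>0$, and in particular throughout $r-t\le\lambda\le r+t$. The point worth stressing is that evaluating $\Theta$ at the endpoints $\lambda=r\pm t$ gives $\Theta=1$, so the infimum over the interval is genuinely attained in the interior; the AM–GM bound captures exactly this interior minimum $\sqrt{1-(t/r)^2}$ without my having to verify separately that the critical point $\lambda=\sqrt{r^2-t^2}$ falls inside $[r-t,r+t]$.

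It then remains to insert the constraint from $\Sigma_1$. Since $r-t\ge\delta t$ forces $t/r\le 1/(1+\delta)$, I obtain
\[
\Theta(\lambda,r,t)\ge\sqrt{1-\frac{1}{(1+\delta)^2}}=\frac{\sqrt{\delta(\delta+2)}}{1+\delta}.
\]
The final, and most delicate, step is the elementary comparison $\frac{\sqrt{\delta(\delta+2)}}{1+\delta}\ge\frac{\delta}{\delta+2}$, which after squaring and clearing denominators is equivalent to $(\delta+2)^3\ge\delta(1+\delta)^2$, i.e.\ to $4\delta^2+11\delta+8\ge0$, a polynomial inequality holding for all $\delta>0$. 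This establishes (\ref{est_Xi}).

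I expect the only real obstacle to be bookkeeping in this last comparison: the argument naturally produces the sharper constant $\sqrt{\delta(\delta+2)}/(1+\delta)$, and one must confirm that the cleaner constant $\delta/(\delta+2)$ stated in the lemma is genuinely the weaker bound. This cleaner form is precisely what is wanted downstream, since the choice (\ref{def-delta}) of $\delta$ gives $\delta\ge 2/\eta_m$, whence $\frac{\delta}{\delta+2}=\frac{1}{1+2/\delta}\ge\frac{1}{1+\eta_m}$; together with the elementary fact that $\Theta\le1$ on the interval (convexity of $\Theta$ in $\lambda$ with endpoint values $1$), this lands $\Theta$ exactly in the range $[\,1/(1+\eta_m),\,1\,]$ required to invoke Lemma \ref{lem_P}.
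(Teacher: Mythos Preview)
Your argument is correct, but it takes a different and somewhat longer path than the paper's.

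The paper simply bounds the numerator of $\Theta$ below by replacing $\lambda^2$ with $(r-t)^2$ and bounds the denominator above by replacing $\lambda$ with $r+t$, obtaining
\[
\Theta(\lambda,r,t)\ge\frac{(r-t)^2+r^2-t^2}{2r(r+t)}=\frac{r-t}{r+t}\ge\frac{\delta}{\delta+2},
\]
the last inequality following directly from $r-t\ge\delta t$ on $\Sigma_1$. This cruder estimate lands exactly on the stated constant with no further algebra.

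Your AM--GM route instead extracts the \emph{sharp} infimum $\sqrt{1-(t/r)^2}$ of $\Theta$ over $\lambda>0$, which is a stronger intermediate bound; the price is the additional comparison $\sqrt{\delta(\delta+2)}/(1+\delta)\ge\delta/(\delta+2)$ at the end. That step is fine, and your closing remark linking $\delta/(\delta+2)\ge 1/(1+\eta_m)$ to the hypothesis of Lemma~\ref{lem_P} is a nice clarification the paper leaves implicit. So: both proofs are valid; the paper's is shorter, while yours identifies the true minimum of $\Theta$ and explains why the particular form of the constant matters downstream.
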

\begin{proof}
Its easy to see that 
\[
\Theta(\lambda,r,t)\ge \frac{(r-t)^2+r^2-t^2}{2r(r+t)}=\frac{r-t}{r+t}\ge \frac{\delta}{\delta+2}
\]
for $r-t\le \lambda \le r+t$ and $(r,t)\in \Sigma_1$.
\end{proof}

Let us first note that  (\ref{u^0_odd_1}) yields:
\begin{align}
\label{u^0_odd_2}
\begin{array}{ll}
u^0(r,t)=
&\d\frac{1}{2r^m}\left\{f(r+t)(r+t)^m+f(r-t)(r-t)^m\right\}\\
&\d+\frac{1}{2r^m}\int_{r-t}^{r+t}\lambda^mf(\lambda)P_{m-1}'
\left(\Theta(\lambda,r,t)\right)
\left(-\frac{t}{r\lambda}\right)d\lambda\\
&\d+\frac{1}{2r^m}\int_{r-t}^{r+t}\lambda^mg(\lambda)P_{m-1}
\left(\Theta(\lambda,r,t)\right)d\lambda.
\end{array}
\end{align}
Thanks to  lemma \ref{lem_P}, lemma \ref{lem_Xi} and the assumption (\ref{asm_odd_1}) or (\ref{asm_odd_2}), 
then  (\ref{u^0_est_pre_odd}) holds, for all $(r,t)\in \Sigma_1$. 
Hence, the proof of the proposition \ref{prop_frame_high} in odd space dimension $n=2m+1,\ m=2,3,4,\cdots$ is complete. 
\hfill$\Box$

\smallskip
{\bf Completion of the  Proof of Lemma \ref{lem_frame_high}.}  
Let us note here that  the first term in (\ref{frame_high}) of lemma \ref{lem_frame_high} is obtained as follows. Thanks to (\ref{asm_odd_1}), then 
(\ref{u^0_est_pre_odd}) yields
\begin{align}
\label{u^0_odd_est_1}
\begin{array}{lll}
u^0(r,t)&\ge
\d\frac{C_1}{2(1+r+t)^{\kappa}}\left\{1+\left(\frac{r-t}{r}\right)^m\right\}\\
&\ge\d\frac{C_1}{2}\left\{1+\left(\frac{2}{3}\right)^m\right\}\frac{t}{(1+r+t)^{1+\kappa}}
\end{array}
\end{align}
for all $(r,t)\in \Sigma_1$. Since $\eta_m\le 1$ for all $m=2,3,4,\cdots$ 
such that $r\ge3t$ holds. Hence, the first term in (\ref{frame_high}) valid for $\d C_4=C_12^{-1}\left\{1+(3/2)^m\right\}$. 
Next, we shall show by using the assumption (\ref{asm_odd_2}). Then, (\ref{u^0_est_pre_odd})  yields 
\begin{align}
\label{u^0_odd_est_2}
\begin{array}{lll}
u^0(r,t)&\ge\d\frac{C_2}{4r^m}\int_{r-t}^{r+t}\lambda^m(1+\lambda)^{-\kappa-1}d\lambda\\
&\ge\d\frac{C_2t}{4(1+r+t)^{\kappa+1}}.
\end{array}
\end{align}
for all $(r,t)\in \Sigma_1$.  
Hence, the first term in (\ref{frame_high}) valid for $C_4=C_2/4$.
\hfill$\Box$

\section{High Even Dimensions: $n=4,6,8\cdots$} \label{sec4}
{\bf Proof of proposition \ref{prop_frame_high} in \boldmath{$n=2m,\ m=2,3,4,\cdots$.}} 
According to  formula (6b) in \cite{R3}, we have
\begin{align} 
\label{u_0_even} 
u^0(r,t)= \frac{\p}{\p t}\frac{2}{\pi r^{m-1}}I(r,t,u^0(\cdot,0))
+\frac{2}{\pi r^{m-1}}I(r,t,u_t^0(\cdot,0)),
\end{align} 
where,
\begin{align} 
\label{I} 
I(r,t,\psi(\cdot))
=\int_0^t\frac{\rho d\rho}{\sqrt{t^2-\rho^2}} 
\int_{|r-\rho|}^{r+\rho} 
\frac{\lambda^m\psi(\lambda) 
T_{m-1}\left(\Theta(\lambda,r,\rho)\right)d\lambda} 
{\sqrt{\lambda^2-(r-\rho)^2} 
\sqrt{(r+\rho)^2-\lambda^2}}
\end{align} 
and as usual, in (\ref{I}) $T_k$ denotes Tschebyscheff polynomials of degree $k$ defined by 
\begin{align} 
\label{T} 
T_k(z):=\frac{(-1)^k}{(2k-1)!!}(1-z^2)^{1/2}\frac{d^k}{dz^k}(1-z^2)^{k-(1/2)} 
\end{align} 

The following auxiliary lemma will be needed. 
\begin{lemma}
\label{lem_4.1}
For $m=2,3,4,\cdots$, there exists a positive constant $\zeta_m$, depending only on 
$m$, such that 
\begin{align}
\label{T_1}
\begin{cases}
\frac{1}{2}\le T_{m-1}(z)\le 1, \vspace{.1in}\\
0<T_{m-1}'(z)\le(m-1)^2,
\end{cases}
\end{align}
for all $\d \frac{1}{1+\zeta_m}\le z \le 1$.
\end{lemma}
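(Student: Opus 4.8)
The plan is to follow verbatim the strategy of the proof of Lemma \ref{lem_P}: record the values of $T_{m-1}$ and of its first two derivatives at the right endpoint $z=1$, and then obtain the four inequalities in (\ref{T_1}) on a left neighborhood $[\frac{1}{1+\zeta_m},1]$ of $1$ by continuity. First I would observe that the polynomials $T_k$ defined by (\ref{T}) are exactly the classical Tschebyscheff polynomials of the first kind, characterized by $T_k(\cos\theta)=\cos(k\theta)$; a direct check from the Rodrigues-type formula (\ref{T}) gives $T_0(z)=1$, $T_1(z)=z$, $T_2(z)=2z^2-1$, and the general identity then follows. From this representation two facts are immediate: $|T_k(z)|\le 1$ for all $z\in[-1,1]$, so that the upper bound $T_{m-1}(z)\le 1$ in (\ref{T_1}) holds on the entire interval and needs no smallness of $\zeta_m$; and $T_k(1)=1$.

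Next I would compute the endpoint data of the derivatives. Writing $z=\cos\theta$ and matching the Taylor expansions of $\cos(k\theta)$ and of $z$ in powers of $\theta$ near $\theta=0$ (equivalently, matching the Taylor coefficients of $T_k$ at $z=1$), one finds
\[
T_{m-1}(1)=1,\qquad T_{m-1}'(1)=(m-1)^2,\qquad T_{m-1}''(1)=\frac{(m-1)^2\big((m-1)^2-1\big)}{3}.
\]
The value $T_{m-1}'(1)=(m-1)^2$ is precisely the constant appearing as the upper bound in (\ref{T_1}), which is the reason the lemma is stated with this exact number.

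Then I would split into cases exactly as in Lemma \ref{lem_P}. For $m=2$ we have $T_1(z)=z$, so $\frac12\le T_1(z)\le 1$ and $T_1'(z)\equiv 1=(m-1)^2$ hold for $\frac12\le z\le 1$; hence (\ref{T_1}) is valid with $\zeta_2=1$. For $m\ge 3$ put $k=m-1\ge 2$, so that $T_k''(1)=\frac{k^2(k^2-1)}{3}>0$. Since $T_k$, $T_k'$ and $T_k''$ are continuous and satisfy $T_k(1)=1>\frac12$, $T_k'(1)=k^2>0$ and $T_k''(1)>0$, there exists $\zeta_m>0$ such that on $[\frac{1}{1+\zeta_m},1]$ one has $T_k(z)\ge\frac12$, $T_k'(z)>0$ and $T_k''(z)>0$. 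The last inequality makes $T_k'$ increasing on this interval, whence $T_k'(z)\le T_k'(1)=(m-1)^2$ for $z\le 1$; together with the bound $T_k(z)\le 1$ from the first paragraph this yields (\ref{T_1}).

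The only genuinely nontrivial point is the upper bound $T_{m-1}'(z)\le(m-1)^2$: unlike the other three inequalities it is not a soft consequence of the endpoint value alone, but requires the sign of the second derivative at $z=1$ to force $T_{m-1}'$ to be monotone to the left of $1$. Establishing $T_{m-1}''(1)>0$ for $m\ge 3$ (and its degeneracy $T_1''\equiv 0$, treated separately) is therefore the crux; everything else is the same continuity packaging used in Lemma \ref{lem_P}.
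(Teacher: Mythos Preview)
Your proof is correct and follows essentially the same approach as the paper: compute the endpoint values $T_{m-1}(1)=1$, $T_{m-1}'(1)=(m-1)^2$, $T_{m-1}''(1)>0$ (for $m\ge 3$) and conclude by continuity, treating $m=2$ separately with $\zeta_2=1$. The only cosmetic difference is that the paper extracts these endpoint values from the Tschebyscheff differential equation $(1-z^2)T_{m-1}''-zT_{m-1}'+(m-1)^2T_{m-1}=0$ rather than from the identity $T_k(\cos\theta)=\cos(k\theta)$.
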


\begin{proof}
Let us first consider the case of $m=2$. Then, we easily obtain (\ref{T_1}) since $T_{1}(z)=z$
and we may take  $\zeta_m=1$. 
Now, let $m\ge3$. Since $T_{m-1}(1)=1$, then the first assertion is trivial as long as $\zeta_m>0$ is sufficiently small.  For $m\geq 2$, we recall that  the Tchebysheff polynomial $T_{m-1}(z)$ satisfies the ODE:
\begin{align} 
\label{diff_T} 
(1-z^2)T_{m-1}''(z)-zT'_{m-1}(z)+(m-1)^2T_{m-1}(z)=0\quad \mbox{for}\quad  |z| \le 1. 
\end{align} 
Thus,  (\ref{diff_T}) yields
\begin{align} \label{T'1} 
\begin{cases}
T'_{m-1}(1)=(m-1)^2, \vspace{.1in}\\
T''_{m-1}(1)=\frac{1}{3}m(m-2)(m-1)^2,
\end{cases}
\end{align} 
for $m\geq 3$. Hence, the second assertion of the lemma follows from continuity $T'_{m-1}(z)$ and $T''_{m-1}(z)$.
\hfill$\Box$

\medskip

In order to obtain the desired lower bound in high even dimensions, we shall use the  following change variables in (\ref{I}):
For $(r,t)\in \Sigma_1$, we introduce: 
\begin{align}
\label{change_vari}
\begin{cases}
\d \xi=\frac{r+\rho-\lambda}{2\rho}\quad \mbox{in $\lambda$-integral}, \\ 
\d \rho=t\eta\quad \mbox{ in $\rho$-integral}.
\end{cases}
\end{align}
Then, with this change of variables then (\ref{I}) reduces to: 
\begin{align} 
\label{I_new}
I(& r,t,\psi(\cdot))=\notag \\
&\frac{t}{2}\int_{0}^{1}\frac{\eta d\eta}{\sqrt{1-\eta^2}}
\int_{0}^{1}\frac{K(r,t,\eta,\xi)\psi(r+t\eta-2t\eta\xi)}{\sqrt{\xi}\sqrt{1-\xi}}  T_{m-1}\left(\Theta(r,t,\eta,\xi)\right)d\xi.
\end{align}
In addition,
\begin{align}
\label{diff_I} 
\frac{\p}{\p t}& I(r,t,\psi(\cdot))
=\notag \\ 
&\frac{1}{2}\int_0^1\frac{\eta d\eta}{\sqrt{1-\eta^2}}
\int_{0}^{1}\biggl\{K(r,t,\eta,\xi)\psi(r+t\eta-2t\eta\xi)T_{m-1}(\Theta(r,t,\eta,\xi))\notag\\
&+t\frac{\p}{\p t}\left\{K(r,t,\eta,\xi)\psi(r+t\eta-2t\eta\xi)T_{m-1}(\Theta(r,t,\eta,\xi))\right\}\biggr\}\frac{d\xi}{\sqrt{\xi}\sqrt{1-\xi}},
\end{align} 
where,
\begin{align} 
\label{K} 
K(r,t,\eta,\xi)=\frac{(r+t\eta-2t\eta\xi)^m}{\sqrt{r+t\eta-t\eta\xi}
\sqrt{r-\xi t\eta}} 
\end{align} 
and
\begin{align}
\label{theta}
\Theta(r,t,\eta,\xi):=\Theta\left(r+t\eta-2t\eta\xi,r,t\eta\right)=\frac{(r+t\eta-2t\eta\xi)^2+r^2-t^2\eta^2}{2r(r+t\eta-2t\eta\xi)}.
\end{align}

The following proposition is crucial to the rest of the proof. 
\begin{proposition}
\label{prop_4.2}
Let $m=2,3,4\cdots$. Assume that $w\in C^1((0,\infty))$ and $w(y)>0$ for 
$y\ge R$, where $R$ is as given in (\ref{asm_even}). Then, for $0\le \xi,\eta \le 1$ and $(r,t)\in\Sigma_1$, we have
\begin{align} \label{diff_kernel}
\frac{\p}{\p t}& \left\{K(r,t,\eta,\xi)w(r+t\eta-2t\eta\xi)T_{m-1}(\Theta(r,t,\eta,\xi))\right\} \notag\\
& \ge-\left\{E_m\frac{w(r+t\eta-2t\eta\xi)}{r+t\eta-2t\eta\xi} +|w'(r+t\eta-2t\eta\xi)|\right\}
K(r,t,\eta,\xi),
\end{align}
where $E_m$ is defined by 
\[
E_m=m+\frac{1}{8}+\frac{5\zeta_m(m-1)^2}{3}.
\]
\end{proposition}
\par\noindent
\begin{proof}
By direct computation, we obtain: 
\[ 
\begin{array}{lll} 
& \d\frac{\p}{\p t}\Big\{K(r,t,\eta,\xi)w(r+t\eta-2t\eta\xi)T_{m-1}(\Theta(r,t,\eta,\xi))\Big\}&\\ 
&\d=K(r,t,\eta,\xi)\Big\{\eta \left\{I_1+I_2+I_3+I_4\right\}T_{m-1}(\Theta(r,t,\eta,\xi))+I_5\Big\}, 
\end{array} 
\] 
where,
\begin{align}\label{I_i}
I_1&=m(1-2\xi)\frac{w(r+t\eta-2t\eta\xi)}{r+t\eta-2t\eta\xi}, \notag\\
I_2&=(1-2\xi)w'(r+t\eta-2t\eta\xi),  \notag \\
I_3&=-\frac{1}{2}(1-\xi)\frac{w(r+t\eta-2t\eta\xi)}{r+t\eta-t\eta\xi}, \\
I_4&=\frac{\xi}{2}\frac{w(r+t\eta-2t\eta\xi)}{r-\xi t\eta}, \notag \\
I_5&=w(r+t\eta-2t\eta\xi)T'_{m-1}(\Theta(r,t,\eta,\xi))\frac{\p}{\p t}\Theta(r,t,\eta,\xi). \notag
\end{align}

{\bf  Estimates for terms involving ${\bf I_1, \cdots, I_4}$:}  For
$\d 0\le \xi \le \frac{1}{2}$ and and $0\le \eta \le 1$, we have:
\begin{align}
\label{est_0^1/2}
\eta(I_1+I_2+I_3+I_4) T'_{m-1}(\Theta)
 & \ge \eta\left(\frac{m-1}{2}+\frac{(3-4m)\xi}{4}\right)
\frac{w(r+t\eta-2t\eta\xi)}{r+t\eta-2t\eta\xi}\notag \\
&-|w'(r+t\eta-2t\eta\xi)| \notag \\
&\ge -\frac{1}{8}\frac{w(r+t\eta-2t\eta\xi)}{r+t\eta-2t\eta\xi}-|w'(r+t\eta-2t\eta\xi)|,
\end{align}
where we have used Lemma \ref{lem_4.1} and Lemma \ref{lem_Xi} with $\lambda=r+t\eta-2t\eta\xi$ and $t=t\eta$ for $0\le \eta,\xi\le1$ and $(r,t)\in \Sigma_1$. 

Similarly, for $\d \frac{1}{2}\le \xi \le 1$, $0\le \eta \le 1$ and $(r,t)\in \Sigma_1$, then Lemma \ref{lem_4.1} and Lemma \ref{lem_Xi}, yield:\begin{align} 
\label{est_1/2^1} 
\eta(I_1+I_2+I_3+I_4) T'_{m-1}(\Theta)
& \ge -\left(m+\frac{1}{8}\right)\frac{w(r+t\eta-2t\eta\xi)}{r+\eta-2t\eta\xi}\notag \\
& -|w'(r+t\eta-2\eta\xi)|. 
\end{align} 

\medskip

{\bf  Estimates for the term involving ${\bf I_5}$:}

In order to obtain the proper estimate for this term, we first aim  to prove the following property:  

For $0\le\xi,\eta \le 1$, $(r,t)\in\Sigma_1$
and  $m\geq 2$, we have 
\begin{align} 
\label{variable_est} 
-\frac{5\zeta_m}{3(r+t\eta-2t\eta\xi)}\le \frac{\p}{\p t}\Theta(r,t,\eta,\xi)\le 0.
\end{align} 

Indeed,  direct computation shows
\begin{align} \label{4.16}
\frac{\p}{\p t}\Theta(r,t,\eta,\xi)
=\frac{\eta N(r,t,\eta,\xi)}{2r(r+t\eta-2t\eta\xi)^2}, 
\end{align} 
where 
\begin{align*}  
N(r,t,\eta,\xi)
&=\left\{2(r+t\eta-2t\eta\xi)(1-2\xi)-2t\eta\right\}(r+t\eta-2t\eta\xi) \\ 
&-\left\{(r+t\eta-2t\eta\xi)^2+r^2-t^2\eta^2\right\}(1-2\xi). 
\end{align*} 
However, a straightforward computation yields
\begin{align} \label{fact_N} 
N(r,t,\eta,\xi)
=&-8t^2\eta^2\xi^3+(12t^2\eta^2+8rt\eta)\xi^2-(8rt\eta+4t^2\eta^2)\xi\notag\\ 
=&-4t\eta\xi(\xi-1)(2t\eta\xi-(2r+t\eta)). 
\end{align} 
Since  $\d \frac{2r+t\eta}{2t\eta}>1$, for $(r,t)\in\Sigma_1$, then 
for each fixed $\eta$ it follows from (\ref{4.16}) and (\ref{fact_N}) that $\frac{\p}{\p t}\Theta(r,t,\eta,\xi)\le 0$.

In order to prove the lower bound for (\ref{variable_est}), we  compute  the minimum  value of $N(r,t,\eta,\xi)$
as a function of $\xi \in [0,1]$; but  for fixed $\eta$. Indeed,

\[ 
\frac{\p}{\p \xi}N(r,t,\eta,\xi)=-24t^2\eta^2\xi^2+4t\eta(6t\eta+4r)\xi-4t\eta(2r+t\eta) =0
\] 
if and only if,
\[ 
\xi=\frac{(3t\eta+2r)\pm \sqrt{3t^2\eta^2+4r^2}}{6t\eta}.
\] 
Put
\[
\xi_{+}=\frac{(3t\eta+2r)+\sqrt{3t^2\eta^2+4r^2}}{6t\eta},\quad 
\xi_{-}=\frac{(3t\eta+2r)-\sqrt{3t^2\eta^2+4r^2}}{6t\eta}.
\]
Obviously, we have $\xi_{+} >1$ and $0<\xi_{-}<1$, for all $(r,t)\in\Sigma_1$ 
and fixed $\eta$. Therefore,  
\[ 
\begin{array}{lll}
N(r,t,\eta,\xi_{-})
&\d=\frac{1}{27t\eta}(3t\eta+2r- \sqrt{3t^2\eta^2+4r^2})(\sqrt{3t^2\eta^2+4r^2}+3t\eta-2r)\\ 
&\d \quad\times (-4r- \sqrt{3t^2\eta^2+4r^2}). 
\end{array} 
\] 
\par 
Here, it is important to note that the following inequalities hold:
\[ 
\frac{3t\eta+2r- \sqrt{3t^2\eta^2+4r^2}}{t\eta}\le3+\frac{2r}{t\eta}-\frac{2r}{t\eta}=3, 
\] 
\[ 
\sqrt{3t^2\eta^2+4r^2}+3t\eta-2r \le \sqrt{4t^2\eta^2+8rt\eta+4r^2}+3t\eta-2r 
\le 5t\eta, 
\] 
\[ 
-4r- \sqrt{3\t^2\eta^2+4r^2}\ge -4r-2(r+t\eta)\ge-6(r+t\eta). 
\] 
for $t,\eta \ge0$. 
Thus,
\[ 
\frac{\p}{\p t}\Theta(r,t,\eta,\xi)\ge -\frac{5}{3r} 
\cdot\frac{t\eta(r+t\eta)}{(r+t\eta-2t\eta\xi)^2}. 
\] 
for all $0\le \xi,\eta \le 1$ and $(r,t)\in\Sigma_1$.
Finally, we note that 
\[ 
r+t\eta \le 2r\quad 
\mbox{and}\quad r+t\eta-2t\eta\xi\ge r-t\eta\ge r-t \ge \frac{2}{\zeta_m}t 
\] 
holds for $0\le \xi,\eta \le 1$ and $(r,t)\in\Sigma_1$. 
Hence,  the lower bound of (\ref{variable_est}) follows. 
\par\noindent
By combining the estimates, (\ref{est_0^1/2}), (\ref{est_1/2^1}), 
(\ref{variable_est}) and (\ref{T_1}), then   (\ref{diff_kernel}) follows, completing the proof of proposition \ref{prop_4.2}.
\hfill$\Box$

By using proposition \ref{prop_4.2}, (\ref{T_1}) and (\ref{asm_even}), then (\ref{u_0_even}) 
implies
\begin{align}
\label{u^0_est_1}
\begin{array}{lll} 
u^0(r,t)
&\d\ge \frac{t}{\pi r^{m-1}}\int_0^1\frac{\eta d\eta}{\sqrt{1-\eta^2}}
\int_{0}^{1}\left\{\frac{f(r+t\eta-2t\eta\xi)}{2t}
-E_m\frac{f(r+t\eta-2t\eta\xi)}{r+t\eta-2t\eta\xi}\right.\\
&\d\left.\quad-|f'(r+t\eta-2t\eta\xi)|+\frac{g(r+t\eta-2t\eta\xi)}{2}\right\}
\frac{K(r,t,\eta,\xi)d\xi}{\sqrt{\xi}\sqrt{1-\xi}}
\end{array} 
\end{align} 
in $\Sigma_1$.
Here we note that 
\[
t\le r-t\eta\le r+(1-2\xi)t\eta
\]
for $(r,t)\in \Sigma_1$. Since $\zeta_m\le 1$ for all $m=2,3,4,\cdots$, then it follows that  
$r\ge2t$. Thus, by returning to the original variables (\ref{change_vari}), then (\ref{u^0_est_pre_even}) follows. 
The proof of the proposition \ref{prop_frame_high} in $n=2m,\ m=2,3,4,\cdots$ is now complete. 
\hfill$\Box$

\smallskip
{\bf Completion of the  Proof of Lemma \ref{lem_frame_high}.} 
Finally, we shall derive the first term in (\ref{frame_high}). 
It follows from (\ref{asm_even}) and (\ref{u^0_est_pre_even}) that 
\begin{align}
\label{u^0_est_2}
\begin{array}{lll} 
u^0(r,t)
&\d\ge \frac{t}{\pi r^{m-1}}\int_0^1\frac{\eta d\eta}{\sqrt{1-\eta^2}}
\int_{0}^{1}\frac{K(r,t,\eta,\xi)}{(1+r+t\eta-2t\eta\xi)^{\kappa+1}}
\frac{d\xi}{\sqrt{\xi}\sqrt{1-\xi}}\\
&\d\ge \frac{C_3t}{\pi\sqrt{2}(1+r+t)^{\kappa+1}}
\int_0^1\frac{\eta d\eta}{\sqrt{1-\eta^2}}
\int_{0}^{1/2}\frac{d\xi}{\sqrt{\xi}\sqrt{1-\xi}}\\
&\d\ge \frac{C_3t}{\pi\sqrt{2}(1+r+t)^{\kappa+1}}
\end{array} 
\end{align} 
in $\Sigma_1$. 
\end{proof}
\section{Low dimensions: $n=2,3$}\label{sec5}
{\bf Proof of proposition \ref{prop_frame_low}.} 
Let $u^0$ be the solution of  
(\ref{u^0_low}). Then, $u^0$ is given by:
\begin{align}
\label{u^0_low_rep}
u^0(x,t)=\p_t R(f|x,t)+R(g|x,t),
\end{align}
where $R$ is as defined in (\ref{riemman}).
\par 
First, we consider the case $n=3$. By using (\ref{asm_low}), it follows from (\ref{u^0_low_rep}) that
\begin{align}
\label{u^0_est_3dim}
\begin{array}{ll}
u^0(x,t)&\d=\frac{1}{4\pi}\int_{|\omega|=1}\left\{
f+t\omega\cdot\nabla f+tg)\right\}(x+t\omega)dS_{\omega}\\
&\d\ge \frac{t}{4\pi}\int_{|\omega|=1}\left\{
\frac{f}{t}-|\nabla f|+g)\right\}(x+t\omega)dS_{\omega}
\end{array}
\end{align}
for all $(x,t) \in \Sigma_2$. Then,  (\ref{u^0_est_pre_3}) follows  by noting $t\le 1+|x+t\omega|$, for all
$(x,t)\in \Sigma_2$. Furthermore, we easily obtain the first term in (\ref{frame_low}) by substituting
 (\ref{asm_low}) into (\ref{u^0_est_pre_3}).
\par
Next, we consider the case of $n=2$. Here, we make the change variables: 
$\rho=t\xi$ in $\rho$-integral of (\ref{riemman}). Thus, 
\[
R(\phi|x,t)=\frac{t}{2\pi}\int_{0}^{1}\frac{\xi d\xi}{\sqrt{1-\xi^2}}
\int_{|\omega|=1}\phi(x+t\xi\omega)dS_{\omega}.
\]
As in (\ref{u^0_est_3dim}), we obtain
\[
\begin{array}{ll}
u^0(x,t)&\d=\frac{1}{2\pi}\int_{0}^{1}\frac{\xi d\xi}{\sqrt{1-\xi^2}}
\int_{|\omega|=1}\{f+t\xi \omega\cdot \nabla f+tg\}(x+t\xi\omega)dS_{\omega}\\
&\d\ge\frac{t}{2\pi}\int_{0}^{1}\frac{\xi d\xi}{\sqrt{1-\xi^2}}
\int_{|\omega|=1}\left\{\frac{f}{t}-|\nabla f|+g\right\}(x+t\xi\omega)dS_{\omega}
\end{array}
\]
in $\Sigma_2$. Since $t\le 1+|x+t\xi\omega|$ for all
$(x,t)\in \Sigma_2$, then  (\ref{u^0_est_pre_2})  follows,  after going back to the original  variables. 
Furthermore, we easily obtain the first term in (\ref{frame_low}) as in the case of $n=3$. 
Therefore, the proof of lemma \ref{lem_frame_low} is complete.
\end{proof}

\section*{Acknowledgement}
We would like to express our sincere gratitude to Professor Masashi Mizuno 
for showing us the way of a substitution in the decay condition (\ref{asm_even}). This work  began while the fourth author was in his second year of the masters program at the 
Graduate School of Systems Information Science, Future University Hakodate, Japan.


\bibliographystyle{amsplain}

\def\Dbar{\leavevmode\lower.6ex\hbox to 0pt{\hskip-.23ex \accent"16\hss}D}
  \def\hckudot#1{\ifmmode\setbox7\hbox{$\accent"14#1$}\else
  \setbox7\hbox{\accent"14#1}\penalty 10000\relax\fi\raise 1\ht7
  \hbox{\raise.2ex\hbox to 1\wd7{\hss.\hss}}\penalty 10000 \hskip-1\wd7\penalty
  10000\box7} \def\cprime{$'$} \def\cprime{$'$} \def\cprime{$'$}
  \def\cprime{$'$}
\providecommand{\bysame}{\leavevmode\hbox to3em{\hrulefill}\thinspace}
\providecommand{\MR}{\relax\ifhmode\unskip\space\fi MR }
\providecommand{\MRhref}[2]{%
  \href{http://www.ams.org/mathscinet-getitem?mr=#1}{#2}
}
\providecommand{\href}[2]{#2}

\end{document}